\newcommand{\QQ}{\mathbb{Q}}
\newcommand{\ZZ}{\mathbb{Z}}
\newcommand{\fp}{\mathfrak{p}}
\newcommand{\fq}{\mathfrak{q}}
\newcommand{\fI}{\mathfrak{I}}
\newcommand{\fn}{\mathfrak{n}}
\newcommand{\OK}{\mathcal{O}_K}
\newcommand{\rmt}{\sqrt[3]{-14}}
\newcommand{\rmpic}{\sqrt{-1500}}
\newcommand{\Ek}{E_{\kappa}}
\newcommand{\brhoEk}{\bar{\rho}_{\Ek,n}}
\DeclareMathOperator{\rank}{rank}
\DeclareMathOperator{\tor}{tor}
\DeclareMathOperator{\Sel}{Sel}
\DeclareMathOperator{\pr}{pr}
\def\mod#1{{\ifmmode\text{\rm\ (mod~$#1$)}
\else\discretionary{}{}{\hbox{ }}\rm(mod~$#1$)\fi}}
\newtheorem{theorem}{Theorem}[section]
\newtheorem{proposition}[theorem]{Proposition}
\theoremstyle{remark}
\numberwithin{equation}{section}
\def\mod#1{{\ifmmode\text{\rm\ (mod~$#1$)}
\else\discretionary{}{}{\hbox{ }}\rm(mod~$#1$)\fi}}
\begin{document}
\title{The equation $(x-d)^5+x^5+(x+d)^5=y^n$}

\author{Michael A. Bennett}
\address{Department of Mathematics, University of British Columbia, Vancouver, BC Canada V6T 1Z2}
\email{bennett@math.ubc.ca}
\thanks{Supported in part by a grant from NSERC}

\author{Angelos Koutsianas}
\address{Department of Mathematics, University of British Columbia, Vancouver, BC Canada V6T 1Z2}
\email{akoutsianas@math.ubc.ca}


\date{\today}
\keywords{}

\begin{abstract}
In this paper, we solve the equation of the title under the assumption that $\gcd(x,d)=1$ and $n\geq 2$. This generalizes  earlier work of the first author, Patel and Siksek \cite{BennettPatelSiksek16}. Our main tools include Frey-Hellegouarch curves and associated modular forms, and an assortment of Chabauty-type techniques for determining rational  points on curves of small positive genus.
\end{abstract}

\maketitle

\section{Introduction}


 If $f(x)$ is a polynomial in $\mathbb{Z}[x]$ of degree at least three with, say, distinct complex roots, then the {\it superelliptic equation}
 $$
 f(x)=y^n
 $$
 has at most finitely many solutions in integers $x, y$ and $n$ with $|y|, n \geq 2$. This result, due to Schinzel and Tijdeman \cite{SchinzelTijdeman76},  is a consequence of bounds for linear form in logarithms and, as such, is effective -- one can readily deduce explicit bounds upon $y$ and $n$, depending only upon $f$. Actually classifying these solutions for a given polynomial $f(x)$, however, is, in most cases, still beyond current technology. By way of example, while we strongly suspect  that the equation
 $$
 x^2-2=y^n
 $$
 has only the obvious solutions with $y=-1$, this has not yet been proven.
 
 If we replace the polynomial $f(x)$ with a binary form $f(x,d)$ of degree $k \geq 3$, we would expect that analogous finiteness statements hold for the {\it generalized superelliptic equation}
\begin{equation} \label{GSE}
 f(x,d)=y^n, \; \; \gcd(x,d)=1,
\end{equation}
at least provided $n$ is suitably large relative to $k$. That such a result might be inherently difficult to prove  in any generality is suggested by the fact that the case $f(x,d)=xd(x+d)$ is essentially equivalent to Fermat's Last Theorem \cite{Wiles95}. Some statements for forms of degree $k \in \{ 3, 4, 6, 12 \}$ may be found in a paper of the first author and Dahmen \cite{BennettDahmen13}, though they are applicable to a vanishingly small proportion of such forms unless $k=3$.

One accessible source of  $f(x,d)$ for which equation (\ref{GSE}) has proven occasionally tractable is provided by forms derived from sums of powers of consecutive terms in arithmetic progression.  Defining
$$
S_j(x,d,k)= \sum_{i=0}^{j-1} (x+id)^k,
$$
equation (\ref{GSE}) with $f(x,d)=S_j(x,d,k)$ is a long-studied problem
(see e.g. \cite{Schaffer56,VoorhoeveGyoryTijdeman79,GyoryTijdemanVoorhoeve80,Brinza84,Urbanowicz88,Pinter97,	BennettGyoryKalmanPinter04,Pinter07,Hajdu15,PatelSiksek17,Argaez19}). The case of three terms, i.e. $j=3$, 
has attracted particular attention over the years, dating back to work of Cassels and Uchiyama independently \cite{Cassels85,Uchiyama79}, who studied the case $d=1$, $n=2$ and $k=3$. More recently,
all solutions of the equation
\begin{equation} \label{three}
S_3(x,d,k)=y^n
\end{equation}
have been determined for $2 \leq k \leq 7$, and various values of $d$ (where the prime divisors of $d$ are small and fixed). The interested reader may consult \cite{Zhang14,BennettPatelSiksek16,KoutsianasPatel18,Koutsianas19,Zhang17,Argaez19, ArgaezPatel19, ArgaezPatel20, DasKoutsianas20}.

Closely related to the focus of the paper at hand is recent work of van Langen \cite{Langen19}, where equation (\ref{GSE}) is completely resolved in the case $f(x,d)=S_3(x,d,4)$, through careful consideration of two Frey-Hellegouarch $\mathbb{Q}$-curves.
We will deduce an analogous result for 
$$
f(x,d)=S_3(x,d,5),
$$
though our argument is essentially rather more straightforward, utilizing elliptic curves defined over $\mathbb{Q}$.
We prove the following theorem.

\begin{theorem}\label{thm:main}
The only solutions to the equation
$$
(x-d)^5+x^5+(x+d)^5 = y^n,~x,y,d,n\in\ZZ,~n\geq 2,
$$
with $\gcd(x,d)=1$ satisfy $x=0, |d|=1$ or  $|x|=1, |d|=2$.
\end{theorem}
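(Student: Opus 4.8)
The plan is to reduce to prime exponents, extract a clean multiplicative structure, and then treat the small exponents and the large prime exponents by different machinery. First I would expand the left-hand side to
$$
(x-d)^5+x^5+(x+d)^5 = x\bigl(3x^4+20x^2d^2+10d^4\bigr),
$$
abbreviate $g(x,d)=3x^4+20x^2d^2+10d^4$, and record that $g(x,d)\equiv 10d^4\pmod{x}$. Since $\gcd(x,d)=1$, this gives $\gcd(x,g)=\gcd(x,10)\in\{1,2,5,10\}$, and a short $2$-adic and $5$-adic valuation analysis shows $v_2(g),v_5(g)\in\{0,1\}$, each nonzero exactly when the corresponding prime divides $x$. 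As it suffices to treat $n=p$ prime (replacing $y$ by a power when $n$ is composite), the near-coprimality of the two factors of $x\,g(x,d)=y^p$ forces, in every case, both $x$ and $g$ to equal a bounded power of $2$ and $5$ times a perfect $p$-th power. One is thus reduced to finitely many generalized superelliptic equations of the shape
$$
3x^4+20x^2d^2+10d^4 = 2^{\alpha}5^{\beta}\,z^p ,\qquad \alpha,\beta\in\{0,1\}.
$$
I would note that the quartic $g$ is irreducible over $\QQ$ and positive definite, and that the sporadic solutions $|x|=1,\ |d|=2$ occur precisely at $p=5$, where $g(\pm1,2)=3^5$; any correct argument must reproduce these.

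For the large-exponent regime I would attach a Frey--Hellegouarch curve to the displayed quartic equation, following the Bennett--Dahmen framework for the generalized superelliptic equation associated to a binary quartic form \cite{BennettDahmen13}. Because $g$ is biquadratic (even in $x$), its resolvent cubic has a rational root, so this recipe furnishes a Frey curve defined over $\QQ$ rather than a $\QQ$-curve; this is exactly the simplification over van Langen's degree-$4$ analysis \cite{Langen19}. After verifying irreducibility of the mod-$p$ Galois representation, modularity together with Ribet level-lowering produces, for each residue case, a finite list of weight-$2$ newforms of explicitly bounded level. I would then eliminate these newforms by comparing traces of Frobenius at several small auxiliary primes, ruling out all but finitely many $p$.

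The small-exponent and residual cases are where the Chabauty-type work concentrates. For $n=2$ and $n=3$, and for the finitely many primes $p$ surviving the newform elimination (including $p=5$), the equation $3x^4+20x^2d^2+10d^4=2^{\alpha}5^{\beta}z^p$ defines curves of small positive genus; in the case $n=2$ one gets genus-one quartic models, for which a two-descent together with rank computations should suffice. For the remaining cases I would determine the rational or $S$-integral points by descent combined with Chabauty's method, elliptic-curve Chabauty over the relevant quadratic or cubic fields, or a Mordell--Weil sieve, as the rank of the Jacobian relative to the genus dictates.

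The main obstacle I anticipate is concentrated in this last step rather than in the newform elimination. Several of the resulting curves are likely to have Jacobian rank equal to their genus, so classical Chabauty does not apply directly and one must pass to an unramified cover or work over an auxiliary number field in order to separate the genuine solutions $x=0,\,|d|=1$ and $|x|=1,\,|d|=2$ from spurious local points; carrying this out uniformly across the finitely many surviving $(\alpha,\beta,p)$, and in particular pinning down exactly the exponent $p=5$ where the sporadic solutions live, is where I expect the delicate part of the argument to lie.
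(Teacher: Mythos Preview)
Your overall plan matches the paper's architecture---factor, split by $\kappa=\gcd(x,10)$, handle large prime exponents via a Frey curve and modular methods, and mop up $n\in\{2,3,5\}$ by explicit curve computations---but there are two places where your proposal diverges from or falls short of what the paper actually does.

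First, the Frey construction. You invoke the Bennett--Dahmen recipe for the binary quartic equation $g(x,d)=2^\alpha 5^\beta z^p$. The paper instead exploits the specific identity
\[
3x^4+20x^2d^2+10d^4 \;=\; 10(d^2+x^2)^2 - 7x^4,
\]
which, after substituting $x=\kappa^{n-1}a^n$, produces a genuine signature-$(n,n,2)$ equation
\[
7\kappa^{4n-5}a^{4n}+b^n=(10/\kappa)\,T^2,\qquad T=d^2+x^2.
\]
The Frey curves are then the standard Bennett--Skinner ones for this ternary form, with lowered levels in $\{70,350,8960,44800\}$. This is more direct than the general quartic-form machinery you propose.

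Second, and more importantly, your modular step has a genuine gap. You say that comparing traces at auxiliary primes will ``rule out all but finitely many $p$'', but you supply no mechanism for obtaining any bound on $p$ at all: level-lowering by itself does not do this, since one could in principle have $a_\ell(E_\kappa)=a_\ell(f)$ for every $\ell$. The paper's decisive (and elementary) observation---missing from your sketch---is that $\gcd(x,d)=1$ forces $3\nmid d$, and then $\kappa b^n\equiv 1-x^2\pmod 3$ yields $3\mid ab$. Hence $3$ is always a prime of multiplicative reduction for $E_\kappa$ while $3\nmid N_f$, so one gets $a_3(f)\equiv\pm 4\pmod{\mathfrak n}$, which both bounds $n$ (here $n\le 16547$) and kills many forms outright. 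Without isolating such a guaranteed multiplicative prime, your elimination step need not terminate.

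On the small exponents your expectations are roughly correct, with one adjustment. The paper handles $n=2$ via a rank-$0$ elliptic curve, and $n=3$ by a mix of rank-$0$ curves, a descent over $\QQ(\sqrt[3]{-14})$, and Chabauty on a Picard curve. For $n=5$, however, the relevant genus-$2$ curve $Y^2=X^5+7\cdot 10^5$ has Jacobian of rank~$3$, so classical Chabauty fails; the paper passes to elliptic-curve Chabauty over the \emph{quintic} field $\QQ(\sqrt[5]{-7})$, not over a quadratic or cubic field as you anticipate. Your instinct that $p=5$ is where the delicate work lies is right, but the required auxiliary field has degree~$5$.
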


We note that this result generalizes an earlier theorem of the first author, Patel and Siksek \cite{BennettPatelSiksek16}, who treated the same equation under the restriction that $d=1$. The main new ingredients we employ, in comparison to \cite{BennettPatelSiksek16}, are a variety of Chabauty-type techniques to handle the small exponent cases $n \in \{ 2, 3, 5 \}$, which reduce to analyzing rational points on curves of positive genus, and an elementary observation about local behaviour at $p=3$.

For each of the four cases $n=2, 3, 5$ and $n \geq 7$, we will make use of the computational algebra package Magma \cite{Magma}. The relevant code for all the computations in this paper can be found at 
\begin{center}
\url{https://github.com/akoutsianas/ap_5th_powers}.
\end{center}

\section{Preliminaries}

Here and henceforth, let us suppose that we have
\begin{equation}\label{eq:main}
(x-d)^5+x^5+(x+d)^5 = y^n,
\end{equation}
with $x$ and $d$ coprime nonzero integers, and $n\geq 2$ an integer. Without loss of generality, we may suppose that $n$ is prime.
Then
$$
x ( 3x^4+20x^2d^2+10d^4)=y^n
$$
and so, writing $\kappa = \gcd (x,10)$, there necessarily exist integers $a$ and $b$ such that
\begin{equation}\label{eq:x_power}
x = \kappa^{n-1} a^n
\end{equation}
and
\begin{equation} \label{fish}
 3x^4+20x^2d^2+10d^4 = \kappa b^n.
\end{equation}
Here, $\kappa a$ and $b$ are coprime. If $xy=0$, then from (\ref{eq:main}), $x=y=0$. We will thus suppose henceforth that $a \neq 0$ and rewrite the last equation as
\begin{equation}\label{eq:pp2_a}
10(d^2 + x^2)^2 - 7x^4 = \kappa b^n.
\end{equation}
From  \eqref{eq:x_power} and \eqref{eq:pp2_a}, we are led to the Diophantine equation
\begin{equation} \label{fowl}
7 \kappa^{4n-5} a^{4n} +  b^n  = \left( \frac{10}{\kappa} \right) T^2,
\end{equation}
where $T = d^2 + x^2$. A simple but very helpful observation is as follows. Since $\gcd(x,d)=1$ and $n \geq 2$, equation (\ref{fish}) implies that $3 \nmid d$.
It follows from (\ref{fish}), then, that 
$$
\kappa b^n \equiv 1-x^2 \mod{3}.
$$
Thus either $3 \mid x$ (in which case, from (\ref{eq:x_power}), $3 \mid a$), or $3 \nmid x$, so that $x^2 \equiv 1 \mod{3}$, whence $3 \mid b$. 
We thus may conclude, in all cases, that $3 \mid ab$ (i.e. that $3 \mid y$ in equation (\ref{eq:main})).

\section{The case $n \geq 7$}

We begin by treating the case of larger prime exponents, i.e. those with $n \geq 7$.
Following \cite{BennettSkinner04}, we construct signature $(n,n,2)$ Frey curves $E_\kappa$ for each value of $\kappa$, namely 
\begin{equation} \label{F1}
E_1 \; \; : \; \; Y^2 = X^3 + 20TX^2 + 10b^nX, 
\end{equation}
\begin{equation} \label{F2}
E_2 \; \; : \; \; Y^2 + XY = X^3 + \frac{5T - 1}{4}X^2 + 35\cdot 2^{4n-11}a^{4n}X, 
\end{equation}
\begin{equation} \label{F3}
E_5 \; \; : \; \; Y^2 = X^3 + 4TX^2 + 2b^nX
\end{equation}
and
\begin{equation} \label{F4}
E_{10} \; \; : \; \; Y^2 + XY = X^3 + \frac{T-1}{4}X^2 +  7\cdot 10^{4n - 11}a^{4n}X.
\end{equation}

We denote by $\brhoEk$ the Galois representation acting on the $n$--torsion subgroup of $\Ek$. For prime $n\geq 7$, from modularity \cite{Wiles95,TaylorWiles95,BreuilConradDiamondTaylor01}, irreducibility of $\brhoEk$ \cite[Corollary 3.1]{BennettSkinner04}, the fact that $3 \mid ab$ (so that, in particular, $ab \neq \pm 1$), and Ribet's level lowering theorem \cite[Theorem 1.1]{Ribet90}, we may conclude that  there exists a  weight $2$ cuspidal newform $f$ of level $N_{E\kappa}$, 
with $q$-expansion
\begin{equation} \label{frog}
f = q + \sum_{i\geq 2}a_i(f)q^i
\end{equation}
and eigenvalue field $K_f$, such that  $\Ek \sim_n f$.
Here,
$$
N_{E_\kappa} = \left\{
\begin{array}{cc}
2^8 \cdot 5^2 \cdot 7 & \mbox{ if } \kappa = 1,\\
2\cdot 5^2 \cdot 7 & \mbox{ if } \kappa = 2,\\
2^8 \cdot 5 \cdot 7 & \mbox{ if } \kappa = 5,\\
2 \cdot 5 \cdot 7 & \mbox{ if } \kappa = 10,\\
\end{array}
\right.
$$
and, by $\Ek \sim_n f$, we mean that there exists a prime ideal $\fn \mid n$ of $K_f$ such that, for almost all primes $p$, 
$$
a_p(E_\kappa)\equiv a_p(f)\mod{\fn}.
$$

In order to bound $n$ we appeal to the following by-now  standard result.

\begin{proposition} \label{goblet}
Suppose $E/\QQ$ is an elliptic curve of conductor $N_E$ and $f$ is a newform of weight $2$, level $N_f\mid N_E$, and $q$-expansion as in (\ref{frog}).
Suppose $E\sim_n f$ for some prime $n$. Then there exists a prime ideal $\fn\mid n$ of $K_f$ such that, for all primes $p$:
\begin{itemize}
\item if $p\nmid nN_EN_f$ then $a_p(E)\equiv a_p(f)\mod{\fn}$,
\item if $p\nmid nN_f$ and $p \ \| \ N_E$ then $\pm(p+1)\equiv a_p(f)\mod{\fn}$.
\end{itemize}
\end{proposition}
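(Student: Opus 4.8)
The plan is to prove Proposition~\ref{goblet}, which is a comparison of the traces of Frobenius of the elliptic curve $E$ with the Fourier coefficients of the newform $f$, working modulo a suitable prime $\fn$ of $K_f$ above $n$. The hypothesis $E \sim_n f$ gives, by definition, a prime $\fn \mid n$ of $K_f$ such that $a_p(E) \equiv a_p(f) \mod{\fn}$ for almost all primes $p$; the content of the proposition is to make the congruence precise at \emph{all} primes $p$ by dividing into cases according to the behaviour of $p$ relative to $n$, $N_E$, and $N_f$. The engine throughout is the comparison of the mod-$\fn$ Galois representations $\bar{\rho}_{E,n}$ and $\bar{\rho}_{f,\fn}$, which are isomorphic (as semisimple representations) precisely because $E \sim_n f$: by Chebotarev, agreement of traces at a density-one set of primes forces the two semisimplified representations to coincide, and hence $a_p(E) \equiv a_p(f) \mod{\fn}$ at every prime $p$ at which both representations are unramified.

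First I would handle the case $p \nmid n N_E N_f$. Here both $E$ and $f$ have good reduction at $p$, so $\bar{\rho}_{E,n}$ and $\bar{\rho}_{f,\fn}$ are both unramified at $p$. The Eichler--Shimura relation identifies $a_p(f)$ with the trace of Frobenius on $\bar{\rho}_{f,\fn}$, while $a_p(E)$ is the trace of Frobenius on $\bar{\rho}_{E,n}$; since the representations agree, the traces are congruent mod $\fn$, giving the first bullet. The condition $p \neq n$ is needed to ensure the mod-$n$ representation actually records the trace of Frobenius correctly (i.e.\ that we are not in residue characteristic $n$, where the comparison degenerates).

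Next I would treat the case $p \nmid n N_f$ with $p \,\|\, N_E$, i.e.\ $E$ has multiplicative reduction at $p$ while $f$ has good reduction there. Now $\bar{\rho}_{f,\fn}$ is unramified at $p$, so its Frobenius trace is again $a_p(f) \mod{\fn}$. On the $E$ side, multiplicative reduction means $E$ acquires, after restriction to the decomposition group at $p$, the structure governing a Tate curve, and the key input is that the restriction of $\bar{\rho}_{E,n}$ to the inertia at $p$ is \emph{unramified} exactly when $n \mid \operatorname{ord}_p(\Delta_E)$ (or more precisely, the relevant Tate parameter condition holds); under the level-lowering hypothesis that produced $f$ at a level dividing $N_E$ with $p \nmid N_f$, the representation is unramified at $p$, and then the trace of Frobenius on $\bar{\rho}_{E,n}$ is $\pm(p+1) \mod{\fn}$, where the sign is $+1$ or $-1$ according to whether the reduction is split or non-split multiplicative. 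Matching this against $a_p(f) \mod{\fn}$ yields the second bullet.

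The main obstacle, and the step requiring the most care, is the multiplicative-reduction case: one must correctly compute the semisimplification of $\bar{\rho}_{E,n}$ restricted to the decomposition group at a prime of multiplicative reduction and show it is unramified, so that Frobenius acts with eigenvalues whose sum is $\pm(p+1)$ modulo $\fn$. This is exactly the local analysis underlying Ribet's level-lowering theorem, and the cleanest route is to cite the standard description of $\bar{\rho}_{E,n}$ at primes of multiplicative reduction via the theory of the Tate curve (see, e.g., \cite{Ribet90}), rather than to reprove it. By contrast, the good-reduction case is immediate from Eichler--Shimura and Chebotarev. Since the proposition is explicitly flagged as ``by-now standard,'' I would present it as a direct consequence of these two local computations combined with the isomorphism of semisimplified mod-$\fn$ representations furnished by $E \sim_n f$, and refer to \cite{Ribet90,BennettSkinner04} for the details of the multiplicative case.
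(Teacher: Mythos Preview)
Your sketch is correct and is indeed the standard argument: compare the semisimplified mod-$\fn$ Galois representations attached to $E$ and $f$, read off traces of Frobenius at primes of common good reduction, and at primes of multiplicative reduction for $E$ use the Tate-curve description to see that the semisimplification of $\bar\rho_{E,n}$ restricted to the decomposition group has Frobenius trace $\pm(p+1)$. One small refinement: in the multiplicative case you do not need to invoke the level-lowering hypothesis separately to conclude unramifiedness; it is cleaner to observe directly that since $p\nmid nN_f$ the representation $\bar\rho_{f,\fn}$ is unramified at $p$, and since it is isomorphic (after semisimplification) to $\bar\rho_{E,n}$, the trace of Frobenius on $\bar\rho_{f,\fn}$ equals the trace of Frobenius on the semisimplification of $\bar\rho_{E,n}\!\mid_{G_p}$, which the Tate parametrisation computes as $\pm(p+1)$.

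As for comparison with the paper: the paper does not supply a proof of this proposition at all. It is stated as a ``by-now standard result'' and used as a black box, so your write-up already goes further than the paper does. If you wish to match the paper's treatment, a one-line reference (e.g.\ to Kraus--Oesterl\'e or to \cite[Proposition~4.2--4.3]{BennettSkinner04}) would suffice.
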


It is important to note here that, {\it a priori}, this result does not automatically provide an upper bound upon $n$, as we might have that $a_p(E)=a_p(f)$ for all $p$. In our case, however, where we apply Proposition \ref{goblet} with $E=E_\kappa$,  the fact that $3 \mid ab$ implies that $3 \nmid nN_f$ and $3 \mid N_{E_\kappa}$, so that 
\begin{equation} \label{goody}
a_3(f) \equiv \pm 4 \mod{\mathfrak{n}}.
\end{equation}
This congruence alone eliminates many possible forms $f$ from consideration and, in all cases, provides an upper bound upon $n$. Explicitly, we find after computing representatives for each Galois conjugacy class of weight $2$ newforms of levels $70, 350, 8960$ and $44800$ (there are $1$, $8$, $64$ and $196$ such classes, respectively), that $n \leq 16547$ (with the value $n=16547$ corresponding to a pair of classes of forms of level $44800$).
For the remaining pairs $(f,n)$, we may then appeal to Proposition \ref{goblet} with $E=E_\kappa$ and use the fact that $E_\kappa$ has nontrivial rational $2$-torsion (so that $a_p(E_\kappa) \equiv 0 \mod{2}$, for each prime $p$ of good reduction). For each prime $p \geq 11$,  $p \neq n$, Proposition \ref{goblet} thus implies a congruence of the shape
\begin{equation} \label{conger}
a_p(f)\equiv 2 m \mod{\fn} \; \mbox{ or } \; a_p(f)\equiv \pm (p+1) \mod{\fn},
\end{equation}
where $\fn \mid n$ and, by the Weil bounds, $m$ is an integer with $|m| \leq \sqrt{p}$. Applying these congruences to each remaining pair $(f,n)$ for each prime $11 \leq p \leq 97$, $p \neq n$, we eliminate all but a number of pairs with $n=7$ ($2$ forms at level $350$, $4$ forms at level $8960$ and $30$ forms of level $44800$), $n=11$ ($12$ forms of level $44800$), and $n=13$ ($4$ forms at level $8960$ and $8$ forms of level $44800$).

To show that these remaining $60$ pairs $(f,n)$ fail to give rise to solutions to equation (\ref{fowl}), 
we will appeal to Proposition \ref{goblet} with more carefully chosen primes $p$. 
This argument has its genesis in work of  Kraus \cite{Kraus98}. The key observation is that if $p \equiv 1 \mod{n}$ is not too large, relative to $n$, then $n$-th powers take on relatively few values modulo $p$ and hence, from (\ref{fowl}) and the models for the $E_\kappa$, the choices for $m$ in (\ref{conger}) are greatly restricted. An extreme example for this is the case $n=11$ and $\kappa=1$, where, choosing $p=23$, we find that (\ref{fowl}) has no solutions modulo $23$, unless $23 \mid ab$.
There are thus no values of $m$ to consider in (\ref{conger}) and hence
\begin{equation} \label{bad}
a_{23}(f)\equiv \pm 24 \equiv \pm 2 \mod{\fn},
\end{equation}
for a prime $\fn$ in $K_f$ with $\fn \mid 11$. This eliminates $8$ of the remaining  $12$ pairs $(11,f)$.
For the other $4$ pairs, (\ref{bad}) cannot be ruled out and we must choose a different prime $p$. With, for example, $p=89$, we find that for a given integer $x$, either $89 \mid x$ or 
$$
x^{11} \equiv \pm 1, \pm 12, \pm 34, \pm 37 \mod{89}.
$$
From (\ref{fowl}), it follows that either $89 \mid ab$, or that the pair $(b^{11},T) $ is congruent to one of 
$$
(1, \pm 28), (-1, \pm 27), (12, \pm 32), (-12, \pm 20), (37, \pm 29), (-37, \pm 7),
$$
modulo $89$. A short calculation with the model (\ref{F1}) reveals that either $89 \mid ab$ or
$$
a_{89} (E_1) \in \{ -4, 12, 14 \},
$$
i.e. that $m$ in (\ref{conger}) is restricted to the set  $\{ -2, 6, 7 \}$.
We thus have that
$$
a_{89} (f) \equiv -24, -4, 12, 14, 24 \equiv 1, \pm 2, 3, -4   \mod{\fn},
$$
for a prime $\fn$ in $K_f$ with $\fn \mid 11$. We arrive at the desired contradiction upon calculating that, for the $4$ forms in question, $a_{89} (f) \equiv 0  \mod{\fn}$.

More formally, we can state this argument as follows. Let $n\geq 7$ and $p=nt+1$ be prime, and define
$$
\mu_n(\mathbb{F}_p) = \{x_0\in\mathbb{F}_p^*~:~x_0^t=1\}.
$$
For a pair $(a,b)\in\mu_n(\mathbb{F}_p)$, let $T_{a,b}\in\mathbb{F}_p$ be such that 
$$
\frac{10}{\kappa}T^2_{a,b} = 7\kappa^{4n-5}a^4 + b.
$$
For a triple $(a,b,T_{a,b})$ we denote by $E_{\kappa,a,b,T_{a,b}}$ the elliptic curve over $\mathbb{F}_p$ derived from $E_\kappa$ by replacing $(a^n,b^n,T)$ with $(a,b,T_{a,b})$.

\begin{proposition}\label{prop:Kraus}
With the above notation, suppose that $f$ is a newform with $n\nmid \mbox{Norm}_{K_f/\QQ}(4 - a_p^2(f))$. If, for all triples $(a,b,T_{a,b})$, we have that 
$$
n\nmid \mbox{Norm}_{K_f/\QQ}(a_p(f) - a_p(E_{\kappa,a,b,T_{a,b}})),
$$ 
then no solutions to equation \eqref{fowl} arise from $f$.
\end{proposition}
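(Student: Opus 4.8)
The plan is to argue by contradiction: suppose (\ref{fowl}) admits a solution giving rise to $f$, so that $E_\kappa\sim_n f$ for some prime ideal $\fn\mid n$ of $K_f$, and then derive incompatible congruences at the auxiliary prime $p=nt+1$. The crucial device is that, since we have no explicit handle on $\fn$, every congruence modulo $\fn$ must be converted into an integer statement: if $z\in\mathcal{O}_{K_f}$ satisfies $\fn\mid(z)$, then $n\mid\mathrm{Norm}_{K_f/\QQ}(z)$, because $\mathrm{Norm}(\fn)$ is a power of $n$ dividing $\mathrm{Norm}((z))=|\mathrm{Norm}_{K_f/\QQ}(z)|$. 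This is precisely what makes the two norm hypotheses in the statement effectively checkable. Note also that both $a_p(f)$ and each $a_p(E_{\kappa,a,b,T_{a,b}})\in\ZZ$ lie in $\mathcal{O}_{K_f}$, so the relevant differences do as well.

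First I would record the reduction behaviour of $E_\kappa$ at $p$. Since $p=nt+1\geq n+1\geq 8$ is prime, we have $p\notin\{2,5,7\}$, $p\nmid N_f$ and $p\neq n$, so Proposition \ref{goblet} applies. Using (\ref{fowl}) together with $\gcd(\kappa a,b)=1$, one checks in the standard manner for signature $(n,n,2)$ Frey curves that, away from $\{2,5,7\}$, the primes of bad reduction of each model (\ref{F1})--(\ref{F4}) are exactly those dividing $ab$, at which the reduction is multiplicative with $n\mid\mathrm{ord}_p(\Delta)$. Hence for $p\nmid ab$ the curve $E_\kappa$ has good reduction at $p$, while for $p\mid ab$ we have $p\,\|\,N_{E_\kappa}$ and multiplicative reduction. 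I would then split into these two cases.

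Suppose first that $p\nmid ab$. Reducing the solution modulo $p$, both $a^{4n}=(a^4)^n$ and $b^n$ are nonzero $n$-th powers in $\mathbb{F}_p^*$, hence lie in $\mu_n(\mathbb{F}_p)$, and $T\bmod p$ satisfies $\frac{10}{\kappa}T^2=7\kappa^{4n-5}(a^n)^4+b^n$; thus the good reduction of $E_\kappa$ is exactly one of the enumerated curves $E_{\kappa,a,b,T_{a,b}}$. By the first bullet of Proposition \ref{goblet}, $a_p(E_{\kappa,a,b,T_{a,b}})=a_p(E_\kappa)\equiv a_p(f)\mod{\fn}$, so $\fn\mid\bigl(a_p(f)-a_p(E_{\kappa,a,b,T_{a,b}})\bigr)$ and therefore $n\mid\mathrm{Norm}_{K_f/\QQ}(a_p(f)-a_p(E_{\kappa,a,b,T_{a,b}}))$, contradicting the second hypothesis. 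Suppose instead that $p\mid ab$. By the second bullet of Proposition \ref{goblet}, $a_p(f)\equiv\pm(p+1)\mod{\fn}$; since $p\equiv 1\mod{n}$ and $\fn\mid n$ we have $p+1\equiv 2\mod{\fn}$, whence $\fn\mid(a_p(f)-2)(a_p(f)+2)=a_p^2(f)-4$ and so $n\mid\mathrm{Norm}_{K_f/\QQ}(4-a_p^2(f))$, contradicting the first hypothesis.

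Either way we reach a contradiction, so no solution to (\ref{fowl}) arises from $f$. I expect the only delicate point to be the reduction analysis of the second paragraph: confirming that the two cases are exhaustive and matched to the two hypotheses, and in particular that $E_\kappa$ genuinely has multiplicative reduction with $p\,\|\,N_{E_\kappa}$ at every $p\mid ab$, so that the second bullet of Proposition \ref{goblet} yields $a_p(f)\equiv\pm(p+1)$ rather than an uncontrolled congruence. The manipulations passing from $\fn$-divisibility to divisibility of norms by $n$ are then routine.
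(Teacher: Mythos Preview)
Your argument is correct and is exactly the standard Kraus-type contradiction argument; the paper itself does not spell it out but simply cites \cite[Proposition 8.1]{BugeaudMignotteSiksek06}, whose proof proceeds along the same lines you describe (splitting on $p\mid ab$ versus $p\nmid ab$, invoking Proposition~\ref{goblet} in each case, and passing from $\fn$-congruences to norm divisibility). Your caveat about verifying multiplicative reduction of $E_\kappa$ at primes $p\mid ab$ away from $\{2,5,7\}$ is well placed, and is handled by the conductor/discriminant computations for signature $(n,n,2)$ Frey curves in \cite{BennettSkinner04}.
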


\begin{proof}
The proof is similar to \cite[Proposition 8.1]{BugeaudMignotteSiksek06}.
\end{proof}

Applying Proposition \ref{prop:Kraus} with $p \in \{ 29, 43 \}$ (if $n=7$), $p \in \{ 23, 89 \}$  (if $n=11$) and $p \in \{ 53, 79, 157 \}$ (if $n=13$), completes the proof of Theorem \ref{thm:main} in case $n \geq 7$ is prime. The above computations can be found at
\begin{center}
\url{https://github.com/akoutsianas/ap_5th_powers}
\end{center}
 in the file \textit{elimination\_step.m}. We note that there are two methods one can use to compute data about classes of weight $2$ modular forms in Magma, either the classical approach, or by viewing such forms as Hilbert modular forms over $\mathbb{Q}$. In our situation, they are both reasonable computations (taking less than $24$ hours on a 2019 Macbook Pro); the Hilbert modular form package is substantially faster in the current Magma implementation. It is perhaps worth observing that the labels of forms generated by these two methods do not match.


\section{The case $n=2$}

In case $n=2$, we find immediately that  equation (\ref{fish}) has no integer solutions with  $\kappa = 1$. From (\ref{eq:pp2_a}), working modulo $8$, it follows that necessarily
 $b,d$ are odd, $x$ is even and $\kappa \in \{ 2, 10 \}$. 
Since then
$$
3x^4+20x^2d^2+10d^4 \equiv 10 \mod{16},
$$
we thus have $\kappa=10$, i.e.
\begin{equation} \label{fix}
b^2 = d^4 + 200 d^2 a^4 + 3000a^8.
\end{equation}
Writing
$$
y= \frac{4d(b+d^2+100a^4)}{a^6} \; \mbox{ and } \; 
x=\frac{2(b+d^2+100a^4)}{a^4},
$$
we find that
$$
y^2 = x^3 -400x^2+28000x.
$$
The latter equation defines a model for an elliptic curve, given as 
134400ed1 in Cremona's  database, of rank $0$ over $\mathbb{Q}$ (with torsion subgroup of order $2$). It follows that necessarily either $ad=0$, or that $b+d^2+100a^4=0$. The first cases are excluded by assumption. If $b+d^2+100a^4=0$, then, from (\ref{fix}), again we have that $a=0$, a contradiction.

\section{The case $n=3$}

If we next assume that $n=3$, equations (\ref{eq:x_power}) and (\ref{fish}) imply that
\begin{equation}\label{eq:Picard_curve_n_3}
b^3=\left( \frac{10}{\kappa} \right) d^4 + 20 \kappa^3 d^2 a^6 + 3 \kappa^{7} a^{12}
\end{equation}
and so
\begin{equation} \label{mule}
b^3 = \left( \frac{10}{\kappa} \right) \left( d^2 + \kappa^4 a^6 \right)^2 - 7 \kappa^{7} a^{12},
\end{equation}
i.e. 
\begin{equation} \label{Ellie}
Y^2= X^3 + 7 \kappa  \left( \frac{10}{\kappa} \right)^3,
\end{equation}
where
$$
Y= \frac{100 (d^2+\kappa^4a^6)}{\kappa^5 a^6} \; \mbox{ and } \; X= \frac{10b}{\kappa^3a^4}.
$$
In case $\kappa \in \{ 1, 10 \}$, the elliptic curve corresponding to (\ref{Ellie}) has Mordell-Weil rank $0$ over $\mathbb{Q}$ and trivial torsion. Since we have assumed that $a \neq 0$, we may thus suppose that $\kappa \in \{ 2, 5 \}$. In these cases, the curves corresponding to (\ref{Ellie}) have Mordell-Weil rank $1$ over $\mathbb{Q}$; we will tackle these values of $\kappa$ with two different approaches. 

\vspace{0.5cm}
\paragraph{\textbf{The case $\kappa=2$.}} For this choice of $\kappa$,  from equation \eqref{mule} we have
\begin{equation}\label{eq:k_2_param}
5Z^2 = X^3 + 14Y^3,
\end{equation}
where $Z = d^2 + 2^4a^6$, $X = b$ and $Y = 2^2a^4$. It follows that $\gcd (X,Y)=1$. Suppose $K = \QQ(\rmt)$; the class number of $K$ is $3$, $\OK=\ZZ[\rmt]$ and a generator of the free part of the unit group of $\OK$ is given by
$$
r = -1 + 2\rmt + (\rmt)^2. 
$$
We factorize \eqref{eq:k_2_param} over $K$ to obtain
\begin{equation}\label{eq:k_2_factorization}
5Z^2 = (X - \rmt Y) (X^2 + \rmt XY + (\rmt)^2 Y^2).
\end{equation}
There are unique prime ideals of $K$ above $2$, $3$ and $7$ which we denote by $\fp_2$, $\fp_3$ and $\fp_7$, respectively. If $\fp$ is an prime ideal of $K$ that divides $\gcd(X - \rmt Y,X^2 + \rmt XY + (\rmt)^2 Y^2)$, then we can readily conclude that $\fp\mid (3\rmt X,3\rmt Y)$. Because $\gcd (X,Y)=1$, it follows that $\fp\mid 3\rmt$, whence $\fp=\fp_2$, $\fp_3$ or $\fp_7$. There are two prime ideals of $K$ above $5$ with norm $5$ and $5^2$ which we denote by $\fp_5$ and $\fq_5$, respectively; we have  $\langle 5\rangle = \fp_5\fq_5$. From the above discussion, we may write
\begin{equation}\label{eq:k_2_fac_ideal}
\langle X - \rmt Y\rangle = \fp_2^{a_2}\fp_3^{a_3}\fp_7^{a_7}\fp_5^{a_5}\fq_5^{a^\prime_5}\fI^2,
\end{equation}
where $0\leq a_i < 2$ for $i=2,3,7$, $(a_5,a^\prime_5) = (1,0)$ or $(0,1)$ and $\fI$ is an integral ideal of $\OK$. Because $N(\fp_2)=2$, $N(\fp_3)=3$, $N(\fp_7)=7$ and $N(\fq_5)=5^2$, taking norms on both sides of equation \eqref{eq:k_2_fac_ideal} and appealing to \eqref{eq:k_2_param}, we necessarily have that $a_2=a_3=a_7=0$ and $(a_5,a^\prime_5)=(1,0)$. In conclusion, 
$$
\langle X - \rmt Y\rangle = \fp_5\fI^2.
$$
The ideal $\fp_5$ is not principal but $\fp_5^3 = \langle 5 - \rmt + (\rmt)^2\rangle$. Therefore, we have
\begin{equation} \label{fox}
(X - \rmt Y)^3 = \pm r^i(5 - \rmt + (\rmt)^2)\gamma^2,
\end{equation}
where $\gamma\in\OK$ and $i=0,1$. Replacing $(X,Y)$ be $(-X,-Y)$ we can only consider the positive sign in the last equality. Suppose that
$$
\gamma = s + t\rmt + u(\rmt)^2
$$
 with $s,t,u\in\ZZ$. After expanding the left and right hand sides of (\ref{fox}), we find that either
$$
\begin{cases}
X^3 + 14Y^3 = 5s^2 - 28st + 14t^2 + 28su - 140tu + 196u^2\\
-3X^2Y =-s^2 + 10st - 14t^2 - 28su + 28tu - 70u^2,\\
 3XY^2 =  s^2 - 2st + 5t^2 + 10su - 28tu + 14u^2,
\end{cases}
$$
or
$$
\begin{cases}
X^3 + 14Y^3 = -19s^2 - 56st + 42t^2 + 84su + 532tu + 392u^2,\\
-3X^2Y = -3s^2 - 38st - 28t^2 - 56su + 84tu + 266u^2,\\
 3XY^2 = 2s^2 - 6st - 19t^2 - 38su - 56tu + 42u^2,
\end{cases}
$$
if $i=0$ or $i=1$, respectively. Since $Y = 2^2a^4$ is even, in either case we have that $s$ is necessarily even and hence so is $X^3+14Y^3$, contradicting the fact that $\gcd (X,Y)=1$.

\vspace*{0.5cm}
\paragraph{\textbf{The case $\kappa=5$.}} Finally, let us suppose that $\kappa = 5$. A solution $(a,b)$ of \eqref{eq:Picard_curve_n_3}  corresponds to a rational point on the Picard curve
$$
C:~X_1^3 = Y_1^4 + 200\cdot 5^2 Y_1^2 + 3000 \cdot 5^4,
$$
where $X_1 = \frac{2b}{a^4}$ and $Y_1 = \frac{2d}{a^3}$. Let $J = Jac(C)$ be the Jacobian of $C$. Using Magma's  \textit{PhiSelmerGroup} routine, we can prove that $\rank(J)\leq 1$, while \textit{ZetaFunction} shows that $\# J(\QQ)_{\tor}=1$. On the other hand, we have the non-trivial point 
$$
P:=[(-150,\rmpic) + (-150,-\rmpic) - 2\infty]\in J(\QQ).
$$
Therefore, it follows that $J(\QQ)\cong \ZZ$ and that $P$ generates an infinite subgroup of $J(\QQ)$. Applying the code\footnote{The code can be found at \url{https://github.com/travismo/Coleman}.} from \cite{HashimotoMorrison20} with $p=19$ leads to the conclusion that $C(\QQ) = \{\infty\}$ which completes the proof for $n=3$.

\section{The case $n=5$}

From (\ref{fowl}), we are concerned, in all cases, with the genus $2$ hyperelliptic curve
$$
C:~ Y^2 = X^5 + 7 \cdot 10^5,
$$
where $Y=\frac{10^3(d^2 + x^2)}{\kappa^8a^{10}}$ and $X = \frac{10b}{\kappa^3a^4}$. Unfortunately, the rank of the Jacobian of $C$ is $3$ and Chabauty methods are not immediately applicable. Instead, we note that  the determination of $C(\QQ)$ can be reduced to the problem of computing all the points on a family of genus one curves over $\QQ(\sqrt[5]{7})$ with rational $X$-coordinates.

Let $K = \QQ(\theta)$ where $\theta^5 + 7=0$ and $\alpha = 10 \theta$. Then we can write
$$
X^5 + 7 \cdot 10^5 = (X - \alpha) \left( X^4 +\alpha X^3+ \alpha^2 X^2 + \alpha^3 X + \alpha^4 \right),
$$
from which it follows that 
$$
u^2 = \delta_j (X - \alpha) \; \; \mbox{ and } \; \;  v^2 = \delta_j \left( X^4 +\alpha X^3+ \alpha^2 X^2 + \alpha^3 X + \alpha^4 \right),
$$
where $u,v\in K^*$ and $\delta_j\in K^*/K^{*2}$. Since the polynomial $x^5 + 7\cdot 10^5$ is irreducible over $\QQ$, from \cite[Section 2]{BruinStoll09} we have that $\delta_j$ is an element of $\Sel^{(2)}(C/\QQ)\subset K^*/K^{*2}$. Using Magma's function \textit{TwoSelmerGroup} we can prove that $\Sel^{(2)}(C/\QQ)=\langle a_1,a_2,a_3\rangle$, where
\begin{align*}
a_1 & = -\theta^4 - \theta^3 + \theta^2 + \theta + 1,\\
a_2 & = \theta^4 - \theta^2 - 4\theta - 5,\\
a_3 & = -5\theta^4 - 7\theta^3 - 7\theta^2 - \theta + 15.
\end{align*}
We can therefore choose $\delta_j$ to be an $\mathbb{F}_2$-linear combination of $a_1,a_2$ and $a_3$, so that in particular, we may suppose that
\begin{align*}
\delta_1 & = 1,\\
\delta_2 & = -5\theta^4 - 7\theta^3 - 7\theta^2 - \theta + 15,\\
\delta_3 & = \theta^4 - \theta^2 - 4\theta - 5,\\
\delta_4 & = 75\theta^4 + 99\theta^3 + 73\theta^2 - 41\theta - 257,\\
\delta_5 & = -\theta^4 - \theta^3 + \theta^2 + \theta + 1,\\
\delta_6 & = -33\theta^4 - 65\theta^3 - 77\theta^2 - 49\theta + 43,\\
\delta_7 & = 9\theta^4 + 7\theta^3 - 3\theta^2 - 23\theta - 47,\\
\delta_8 & = 545\theta^4 + 913\theta^3 + 993\theta^2 + 381\theta - 1251.
\end{align*}
For each $\delta_j$, we have a corresponding genus one curve
\begin{equation}
C_j:~v^2 = \delta_j\left( X^4 +\alpha X^3+ \alpha^2 X^2 + \alpha^3 X + \alpha^4 \right).
\end{equation}
It is enough to determine the points $(X,v)$ of $C_j$ such that $X\in\QQ$. The Jacobian $J_j$ of $C_j$ is an elliptic curve over $K$ and using Magma we are able to compute a subgroup of $J_j(K)$ of full rank. The rank of $J_j(K)$ for each $j$ is less than $5$ (see Table \ref{table:rank_Jj}), hence $C_j$ satisfies the Elliptic Curve Chabauty condition over $K$ for every $j$, $1 \leq j \leq 8$ \cite{Bruin03}. If $j\neq 2$,  we are able to determine a $K$-point on $C_j$; in case  $j=2$, we are unable to find an element of  $C_2(K)$ and must argue somewhat differently.

\begin{center}
\begin{table}
\begin{tabular}{| c | c || c | c |} 
\hline
$j$ & $\rank(J_j(K))$ & $j$ & $\rank(J_j(K))$ \\
\hline
$1$ & $2$ & $5$ & $3$ \\
\hline
$2$ & $1$ & $6$ & $2$ \\
\hline
$3$ & $2$ & $7$ & $1$ \\
\hline
$4$ & $4$ & $8$ & $3$ \\
\hline
\end{tabular}
\caption{The rank of $J_j(K)$.}\label{table:rank_Jj}
\end{table}
\end{center}

\subsection{$C_j(K)\neq\emptyset$:} For the cases $j\neq 2$ we have computed a point $P_j\in C_j(K)$. Then it is known that $C_j$ is birationally equivalent to an elliptic curve $E_j$ over $K$ and the map $\phi_j: E_j\mapsto C_j$ may be found  explicitly in \cite[Theorem 1]{Connell92}. We also know that $E_j\simeq_K J_j$. Let $\pr:C_j\mapsto \bar{K}$ with $\pr(X,v) = X$ be the projection map and $\psi_j := \pr\circ\phi_j$. We observe that $\psi_j$ is defined over $K$. 

Let $(X,Y)$ be a rational point of $C$ that corresponds to a point $(X,v)$ on $C_j$. From the above discussion,  it is enough to determine $H_j = E_j(K)\cap\psi_j^{-1}(\QQ)$ because then $X\in\psi(H_j)$. As we have already mentioned, the Elliptic Curve Chabauty condition is satisfied for all cases and hence we can apply the method in \cite{Bruin03} together with Mordell-Weil sieve \cite{BruinStoll10}, as implemented in Magma, to determine $H_j$ (we also recommend \cite[Section 4]{BremnerTzanakis04} for a nice exposition of the method). For $j \in \{ 1, 3, 5, 6, 7, 8 \}$ it is enough to apply a Chabauty argument using the primes $\fp\mid p$ of good reduction of $E_j$ with $p\leq 47$, but for $j=4$ we are forced to consider $p\leq 199$. After carrying this out, we conclude, for $j\neq 2$, that  $X=-6$ or $30$.

\subsection{$C_j(K) = \emptyset$:} For the case $j=2$, we are not able to find a point $P\in C_2(K)$. However, we know that the curve $C_2$ is a $2$-covering of the elliptic curve $J_2$ which means that if $C_2(K)\neq \emptyset$ then $C_2$ corresponds to a non-trivial point of the $2$-Selmer group of $J_2$ over $K$ that lies on the image of $J_2(K)$.

Since $J_2(K)=\ZZ/2\ZZ\times\ZZ$, the image of $J_2(K)/2J_2(K)$ corresponds to three non-trivial elements of $\Sel^{(2)}(J_2/K)$ which in turn correspond to three $2$-covering curves of $J_2$ that have a $K$-point. Using Magma's function \textit{TwoDescent} we can compute all $2$-coverings of $J_2$ that are elements of $\Sel^{(2)}(J_2/K)$. We identify the three of them that have a $K$-point (using the function \textit{Points}) and note that none of them is isomorphic to $C_2$; we may thus  conclude that $C_2(K)=\emptyset$ and so no rational points of $C$ arise from $C_2$.

The data for the various $C_j$, $P_j$ and $E_j$, together with the corresponding Magma code can be found at
\begin{center}
\url{https://github.com/akoutsianas/ap_5th_powers}
\end{center}
 in the files \textit{ap\_5th\_powers\_data.m} and \textit{ap\_5th\_powers.m}.
 
To sum up, we have proved that 
$$
C(\QQ) = \{(30,\pm 5000),(-6,\pm 832),\infty\}.
$$ 
From the fact that $X = \frac{10b}{\kappa^3a^4}$ and $a \neq 0$, we thus have $b=3$ and $a=\kappa=1$, corresponding to the solutions to equation (\ref{eq:main}) with
$$
(|x|,|d|,|y|,n)=  (1, 2, 3, 5).
$$
This completes the proof of Theorem \ref{thm:main}.

\section{Concluding remarks}

In this paper, we have studied  equation (\ref{GSE}) in the case $f(x,d)=S_3(x,d,5)$.
Important to our argument is that we are able to assert that $3 \mid y$, while, at the same time, the associated ternary equation (\ref{fowl}) has the property that its coefficients are coprime to $3$. A nearly identical approach enables one to treat (\ref{GSE}) if $f(x,d)=S_j(x,d,5)$ for every fixed $j \equiv \pm 3 \mod{18}$. Indeed, writing 
$$
x=z-\frac{(j-1)}{2} d, 
$$
we have that
$$
S_j(x,d,5) = \frac{jz}{3}  \left( 3z^4 +5 \left( \frac{j^2-1}{2} \right) z^2d^2 + \left( \frac{(j^2-1)(3j^2-7)}{16} \right)  d^4 \right).
$$
Once again equation (\ref{GSE}) and the fact that $n \geq 2$ implies that $3 \nmid d$ and hence
$$
y^n \equiv \pm z \left( 1 - z^2 \right)  \equiv 0 \mod{3}.
$$

\section{Acknowledgment}

The second author is grateful to Sachi Hashimoto, Travis Morrison and Prof. Michael Stoll for useful discussions and to Prof. John Cremona for providing access to the servers of the Number Theory group of Warwick Mathematics Institute where all the computations took place.

\bibliographystyle{alpha}
\bibliography{my_bibliography}

\end{document}